%
%
%
%
%
%
\RequirePackage{fix-cm}
\documentclass[smallextended]{svjour3}       
\smartqed  
\usepackage{graphicx}
%
%
\usepackage{latexsym,amssymb,amsmath,enumerate,bbm,graphicx,psfrag,color}
\usepackage{stmaryrd} 
\usepackage[latin1]{inputenc}
\usepackage{cite}

%

\newcommand{\N}{\mathbb{N}}
\newcommand{\R}{\mathbb{R}}

\newcommand{\LL}{\mathcal{L}}

\newcommand{\sS}{\mathbb{S}}
\newcommand{\1}{\ensuremath{\mathbbm{1}}}

\newcommand{\dx}[1][x]{\ensuremath{\,{\rm{d}} #1}}

\def\norm#1{\hspace{0.2ex} \|#1\| \hspace{0.2ex}} 
 
\newcommand\mycom[2]{\genfrac{}{}{0pt}{}{#1}{#2}} 
\newcommand{\kommentar}[1]{}

%
%

\begin{document}

\title{Solving an inverse elliptic coefficient problem by convex non-linear semidefinite programming
}
\titlerunning{Solving an inverse coefficient problem by convex semidefinite programming}        

\author{Bastian Harrach}


\institute{B. Harrach \at 
              Institute for Mathematics, Goethe-University Frankfurt, Frankfurt am Main, Germany \\
              \email{harrach@math.uni-frankfurt.de} 
}

\date{}

\maketitle

\begin{abstract}
Several applications in medical imaging and non-destructive material testing lead to inverse elliptic coefficient problems,
where an unknown coefficient function in an elliptic PDE is to be determined from partial knowledge of its solutions.
This is usually a highly non-linear ill-posed inverse problem, for which unique reconstructability results, stability estimates and 
global convergence of numerical methods are very hard to achieve.

The aim of this note is to point out a new connection between inverse coefficient problems and semidefinite programming that may help addressing these challenges.
We show that an inverse elliptic Robin transmission problem with finitely many measurements can be equivalently rewritten as 
a uniquely solvable convex non-linear semidefinite optimization problem.
This allows to explicitly estimate the number of measurements that is required to achieve a desired resolution, to derive an error estimate for noisy data, 
and to overcome the problem of local minima that usually appears in optimization-based
approaches for inverse coefficient problems.
\keywords{Inverse Problem \and Finitely many measurements \and Monotonicity \and Convexity \and Loewner order}
\subclass{35R30 \and 90C22}
\end{abstract}

\section{Introduction}

Inverse elliptic coefficient problems arise in a number of applications in medical imaging and non-destructive material testing.
The arguably most prominent example is the Calder\'on problem \cite{calderon1980inverse,calderon2006inverse} which models electrical impedance tomography (EIT) where 
the electrical conductivity distribution inside a patient is to be determined from current/voltage measurements on its surface, cf.\ \cite{adler2015electrical} for an overview.
Theoretical uniqueness questions for inverse elliptic coefficient problems have mostly been studied in the idealized infinite-dimensional setting where
(intuitively speaking) the unknown coefficient function is to be determined with infinite resolution from infinitely many measurements, cf., e.g., \cite{uhlmann2009electrical,harrach2009uniqueness,kenig2014recent}.
Lipschitz stability results have been obtained for finitely many unknowns and infinitely many measurements in, e.g., \cite{alessandrini2005lipschitz,beretta2015stable,harrach2019global}.
Recently there has been progress on the practically very relevant case of finitely many unknowns and measurements,
cf., e.g., \cite{alberti2019calderon,harrach2019uniqueness,ruland2019lipschitz}. But little is known yet about explicitly characterizing the required number of measurements for a given desired resolution.  

Practical reconstruction algorithms for inverse coefficient problems are usually based on regularized data-fitting, which formulates the
inverse problem as a minimization problem for a residuum functional together with a regularization term. As the residuum formulation is typically non-convex, 
this approach highly suffers from the problem of local minima. 
Convexification approaches for inverse coefficient problems have been studied in, e.g., \cite{klibanov2019convexification}. But, to the knowledge of the author, no equivalent convex reformulations of 
inverse coefficient problems with finitely many measurements have been found yet.

The aim of this work is to show that a uniquely solvable convex reformulation of an inverse coefficient problem is indeed possible if enough measurements are being taken, and that the required number of measurements can be explicitly characterized. More precisely, we state a criterion that is sufficient for unique solvability and 
for the solution minimizing a linear cost functional under a convex non-linear semidefinite constraint. For a given desired resolution and a given number of measurements, the criterion can be explicitly checked by calculating 
finitely many forward solutions. The criterion is fulfilled if sufficiently many measurements are taken. Thus, the required number of measurements 
can be found by starting with a low number and incrementally increasing it until the criterion is fulfilled. The criterion also yields explicit error estimates for noisy data.

This work is closely related to \cite{harrach2021uniqueness} that gives an explicit construction of special measurements that uniquely determine the same number of unknowns in an inverse elliptic coefficient problem by a globally convergent Newton root-finding method. We also formulate our result for the same inverse Robin transmission problem as in \cite{harrach2021uniqueness} which is motivated by EIT-based corrosion detection and may be considered as a simpler variant of the Calder\'on problem. 
Our main advance in this work is the step from Newton root-finding to a convex semidefinite program. 
This allows utilizing a redundant set of given measurements, and eliminates the need of specially constructed
measurements. It also simplifies the underlying theory as it no longer requires simultaneously localized potentials,
and allows the criterion to be written using the Loewner order, which very naturally arises in elliptic inverse coefficient problems 
with finite resolution and finitely many measurements \cite{harrach2021introduction}. Also, to the knowledge of the author,
this work is the first connection between the emerging research fields of semidefinite optimization and inverse coefficient problems,
which might bring new inspiration to these important fields.

\section{Inverse problems for convex monotonous functions}\label{sect:inv_prob}

Let ,,$\leq$'' denote the entry-wise order on $\R^n$, and ,,$\preceq$'' denote the Loewner order on the space of symmetric
matrices $\sS_m\subseteq \R^{m\times m}$, with $n,m\in \N$. For $A\in \sS_m$ the largest eigenvalue is denoted by $\lambda_{\max}(A)$.

Given a-priori bounds $b>a>0$, we consider the inverse problem to
\begin{equation}\label{eq:inv_prob_F}
\text{determine } \quad x\in [a,b]^n\subset \R^n_+\quad \text{ from } \quad F(x)\in \sS_m\subseteq \R^{m\times m},
\end{equation}
where $F:\ \R^n_+\to \sS_m$ is assumed to be a continuously differentiable, convex and monotonically non-increasing matrix-valued function, i.e., for all $x,x^{(0)}\in \R^n_+$, and all $0\leq d\in \R^n$,
\begin{align}
\label{eq:F_mon} F'(x)d &\preceq 0,\\
\label{eq:F_conv} F(x)-F(x^{(0)}) &\succeq F'(x^{(0)})(x-x^{(0)}).
\end{align}
Such problems naturally arise in inverse coefficient problems in elliptic PDEs with finite resolution and finitely many measurements \cite{harrach2021introduction}.

Note that, here and in the following, we write the derivative of $F$ in a point $x\in \R^n_+$
as $F'(x)\in \LL(\R^n,\sS_m)$, so that $F'(x)d$ is a symmetric $m\times m$-matrix for all $d\in \R^n$.
Also note that a continuously differentiable function $F:\ \R^n_+\to \sS_m$ fulfills \eqref{eq:F_mon} and \eqref{eq:F_conv},
if and only if $F$ fulfills 
\begin{alignat*}{2}
F(x^{(0)})&\succeq F(x) && \quad \text{ for all } x,x^{(0)}\in \R^n_+,\ x^{(0)}\leq x,\\
F((1-t) x^{(0)}+t x)&\preceq  (1-t) F(x^{(0)})+t F(x) && \quad \text{ for all } x,x^{(0)}\in \R^n_+,\ t\in [0,1],
\end{alignat*}
cf., e.g., \cite[Lemma~2]{harrach2021introduction} for the only-if-part. The if part immediately follows from 
writing the directional derivative as differential quotient.

In this section we will derive a sufficient criterion for unique solvability of the finite-dimensional inverse problem \eqref{eq:inv_prob_F}
and for reformulating it as a convex optimization problem. Note that our criterion may appear technical at a first glance, but we stress that it only requires finitely many evaluations of directional derivatives of $F$, so that it can be easily checked in practice. Moreover, for the inverse Robin transmission problem considered in section \ref{sect:Robin}, 
we will show that the criterion will always be fulfilled if sufficiently many measurements are taken. 
Hence, the criterion allows to constructively determine the number of measurements that are required for a certain resolution and for convex reformulation by simply 
increasing the number of measurements until the criterion is fulfilled.

To formulate our result, let $e_j\in \R^n$ denote the $j$-th unit vector,
$\1\in \R^n$ denote the vector of ones, and $e_j':=\1-e_j$ is the vector containing zero in the $j$-th component and ones in all others.
For a matrix $A\in \R^{n\times n}$, $\norm{A}_2$ denotes the spectral norm, and for a number $\lambda\in \R$, $\lceil \lambda \rceil$ denotes
the ceiling function, i.e., the least integer greater than or equal to $\lambda$.

\begin{theorem}\label{thm:F_reformulation}
Let $F:\ \R^n_+\to \sS_m$, $n,m\geq 2$, be continuously differentiable, convex and monotonically non-increasing. 
If
\[
F'(z_{j,k})d_j\not\preceq 0\quad \text{ for all } k\in \{2,\ldots,K\},\ j\in \{1,\ldots,n\},
\]
where
\[
z_{j,k}:=\frac{a}{2}e_j' + \left(a+k\frac{a}{4(n-1)}\right)  e_j\in \R^n_+, \quad d_j:=\frac{2b-a}{a}(n-1)e_j' - \frac{1}{2}   e_j\in \R^n,
\]
and $K:=\lceil\frac{4(n-1)b}{a}\rceil-4n-3\in \N$, then the following holds
\begin{enumerate}[(a)]
\item $\hat x\in [a,b]^n$ is uniquely determined by knowledge of $\hat Y:=F(\hat x)$. $\hat x$ is the unique minimizer of the convex optimization problem
\begin{equation}\label{eq:sd_program_nonoise}
\verb|minimize |\ \norm{x}_1=\sum_{j=1}^n x_j \ \verb| subject to | \ x\in [a,b]^n,\ F(x)\preceq \hat Y.
\end{equation}
\item For $\hat x\in [a,b]^n$, $\hat Y:=F(\hat x)$, $\delta>0$, and $Y^\delta\in \sS_m$, with $\norm{\hat Y-Y^\delta}_2\leq \delta$,
 the convex optimization problem 
\begin{equation}\label{eq:sd_program_noisy}
\verb|minimize |\ \norm{x}_1=\sum_{j=1}^n x_j \ \verb| subject to | \ x\in [a,b]^n, \ F(x)\preceq Y^\delta+\delta I
\end{equation}
possesses a minimum, and every such minimum $x^\delta$ fulfills
\[
\norm{\hat x-x^\delta}_\infty\leq \frac{2\delta(n-1)}{\lambda}\quad \text{ with $\lambda:=\min_{\mycom{j=1,\ldots,n,}{k=2,\ldots,K}} \lambda_\text{max}(F'(z_{j,k})(d_j))>0$.}
\]
\end{enumerate}\end{theorem}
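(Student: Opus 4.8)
The plan is to prove (a) and (b) together, obtaining (a) as the noise‑free ($\delta\downarrow 0$) limit of (b), and to reduce everything to one per‑coordinate monotonicity estimate driven by the stated criterion.

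\emph{Soft part.} Each of the constraint sets $\{x\in[a,b]^n:\ F(x)\preceq \hat Y\}$ and $\{x\in[a,b]^n:\ F(x)\preceq Y^\delta+\delta I\}$ is bounded and closed (continuity of $F$, closedness of the positive semidefinite cone), hence compact, and convex, since the reformulation of \eqref{eq:F_conv} recalled before the theorem is exactly matrix‑convexity of $F$. They are non‑empty: $\hat x$ is feasible for the first, and for the second because $\hat Y-Y^\delta\preceq\|\hat Y-Y^\delta\|_2\,I\preceq\delta I$ gives $F(\hat x)=\hat Y\preceq Y^\delta+\delta I$. As the objective $\|x\|_1=\1^Tx$ is linear, these are convex programs over a non‑empty compact set and attain their minimum; and since $\hat x$ is feasible, every minimiser $x^\delta$ of \eqref{eq:sd_program_noisy} (resp.\ of \eqref{eq:sd_program_nonoise}) satisfies $\1^Tx^\delta\le\1^T\hat x$.

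\emph{The monotonicity estimate.} The core claim is: for $\varepsilon\ge 0$, if $x\in[a,b]^n$ minimises $\1^Ty$ subject to $y\in[a,b]^n$, $F(y)\preceq\hat Y+\varepsilon I$, then $x_j\ge\hat x_j-\varepsilon/\lambda$ for every $j$. Fix $j$, write $\mu:=\hat x_j-x_j$, and argue by contradiction. One uses \eqref{eq:F_mon} to replace $x$ by a comparison point $\tilde x\ge x$ whose off‑$j$ entries are all equal to some $\ell\in[a,b]$ (so $F(\tilde x)\preceq F(x)$), and observes that $\tilde x-z_{j,k}$ equals $s\,d_j$ plus a nonnegative multiple of $e_j$, where $s=(\ell-\tfrac a2)/\beta$ and $\beta=\tfrac{(2b-a)(n-1)}{a}$ is the common off‑$j$ entry of $d_j$, exactly when the $j$‑th entry $\zeta_k:=a+k\tfrac{a}{4(n-1)}$ of $z_{j,k}$ satisfies $x_j+\tfrac s2\le\zeta_k$; the linearisation \eqref{eq:F_conv} at $z_{j,k}$ together with $F'(z_{j,k})e_j\preceq 0$ then gives $F(\tilde x)\succeq F(z_{j,k})+s\,F'(z_{j,k})d_j$, while $z_{j,k}\le\hat x$ (automatic in the off‑$j$ entries since $\tfrac a2<a$, needing $\zeta_k\le\hat x_j$) gives $F(z_{j,k})\succeq F(\hat x)$ by \eqref{eq:F_mon}. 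Chaining these with $F(x)\preceq\hat Y+\varepsilon I$ collapses to $s\,F'(z_{j,k})d_j\preceq\varepsilon I$, hence $s\,\lambda_{\max}(F'(z_{j,k})d_j)\le\varepsilon$ and $s\le\varepsilon/\lambda$. The specific choice of $z_{j,k}$, $d_j$ and $K=\lceil 4(n-1)b/a\rceil-4n-3$ is calibrated so that, whenever $\mu$ exceeds the claimed tolerance, an admissible $k\in\{2,\dots,K\}$ realising the geometry above exists for every position of $x_j,\hat x_j$ in $[a,b]$ — the off‑$j$ entries ranging only over $[\tfrac a2,b]$ keeps $\tfrac s2$ below the grid spacing $\tfrac{a}{4(n-1)}$ — and the $\ell_1$‑minimality of $x$ is what bounds $\ell$ away from its worst case and so forces the admissible $s$ to exceed $\varepsilon/\lambda$, giving the contradiction.

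\emph{Assembly.} For (b), note $F(x^\delta)\preceq Y^\delta+\delta I\preceq\hat Y+2\delta I$, so the estimate with $\varepsilon=2\delta$ gives $x^\delta_j\ge\hat x_j-\tfrac{2\delta}{\lambda}$ for all $j$ (and $\lambda>0$ is exactly the hypothesis $F'(z_{j,k})d_j\not\preceq 0$); combined with $\1^Tx^\delta\le\1^T\hat x$ this yields, for each $j$, $x^\delta_j-\hat x_j=(\1^Tx^\delta-\1^T\hat x)-\sum_{i\neq j}(x^\delta_i-\hat x_i)\le(n-1)\tfrac{2\delta}{\lambda}$, whence $\|\hat x-x^\delta\|_\infty\le\tfrac{2\delta(n-1)}{\lambda}$. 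For (a), apply the estimate with $\varepsilon=0$: any minimiser $x$ of \eqref{eq:sd_program_nonoise} satisfies $x\ge\hat x$ and $\1^Tx\le\1^T\hat x$, hence $x=\hat x$, so $\hat x$ is the unique minimiser of \eqref{eq:sd_program_nonoise}. Finally, if $\hat x'\in[a,b]^n$ satisfies $F(\hat x')=\hat Y$, then applying this last statement with $\hat x'$ in place of $\hat x$ (the criterion does not depend on $\hat x$) shows $\hat x'$ is the unique minimiser of the program with constraint $F(y)\preceq F(\hat x')=\hat Y$, which is \eqref{eq:sd_program_nonoise} itself; hence $\hat x'=\hat x$, i.e.\ $\hat x$ is uniquely determined by $\hat Y$.

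\emph{Main obstacle.} The real work is the monotonicity estimate: verifying that the combinatorial calibration of $z_{j,k}$, $d_j$ and $K$ makes a usable index available in every configuration (in particular near the endpoints $a$ and $b$), and making precise how the $\ell_1$‑minimality upgrades the elementary one‑grid‑step comparison into the sharp constant $1/\lambda$. The soft analysis and the final linear‑algebra bookkeeping are routine by comparison.
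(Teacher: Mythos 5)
There is a genuine gap, and it sits exactly where you locate the ``real work'': the per-coordinate monotonicity estimate $x_j\ge\hat x_j-\varepsilon/\lambda$ is not established by the sketched mechanism, and in fact cannot be. Your detection device requires a grid point $\zeta_k=a+k\tfrac{a}{4(n-1)}$, $k\in\{2,\dots,K\}$, lying in the interval $[x_j+\tfrac s2,\hat x_j]$ (this is what makes $\tilde x-z_{j,k}\le s\,d_j$ and $z_{j,k}\le\hat x$ simultaneously possible). Since $\tilde x\ge x$ forces $\ell\ge\max_{i\ne j}x_i\ge a$, one always has $s=(\ell-\tfrac a2)/\beta\ge\tfrac{a^2}{2(2b-a)(n-1)}>0$, so the argument can only fire when $\hat x_j-x_j\ge\tfrac s2$, and generically only when $\hat x_j-x_j$ exceeds (roughly) the fixed grid spacing $\tfrac{a}{4(n-1)}$: if $x_j+\tfrac s2$ and $\hat x_j$ both lie strictly between consecutive grid points, no admissible $k$ exists even though $\hat x_j-x_j>\varepsilon/\lambda$ for small $\varepsilon$. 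Thus the calibration claim (``whenever $\mu$ exceeds the claimed tolerance an admissible $k$ exists'') is false for tolerances below the grid scale; the method detects violations only above a threshold of order $a/(n-1)$, \emph{independent of} $\varepsilon$. Consequently neither the exact uniqueness in (a) (take $\varepsilon=0$: a minimizer $x\ne\hat x$ may undercut $\hat x$ by an arbitrarily small amount in some coordinate, invisible to the grid) nor the $O(\delta)$ bound in (b) follows. The appeal to $\ell_1$-minimality to ``bound $\ell$ away from its worst case'' does not help, because the obstruction is the grid spacing, not the size of $s$; there is also a minor sign slip ($\tilde x-z_{j,k}$ must equal $s\,d_j$ plus a \emph{nonpositive} multiple of $e_j$, consistent with your condition $x_j+\tfrac s2\le\zeta_k$, otherwise the $F'(z_{j,k})e_j\preceq 0$ correction goes the wrong way).

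The missing idea is a normalization that makes the detection scale-invariant. The paper first shows (using a two-sided sandwich $(n-1)e_j'-e_j\le\tfrac{2(n-1)}{a}\bigl(x-z\bigr)\le d_j$ with $z=\tfrac a2 e_j'+(t-\delta)e_j$ near a grid point, combined with the chain ``derivative at $x$ $\succeq$ secant $\succeq$ derivative at $z_{j,k}$'' from \eqref{eq:F_mon}--\eqref{eq:F_conv}) that $\lambda_{\max}\bigl(F'(x)((n-1)e_j'-e_j)\bigr)\ge\lambda$ for \emph{every} $x\in[a,b]^n$. Then, for an arbitrary direction $d=y-\hat x$ with $\min_j d_j\le-\tfrac{1}{n-1}\max_j d_j$, one has $d\le\norm{d}_\infty\bigl(e_k'-\tfrac{1}{n-1}e_k\bigr)$ for some $k$, hence by monotonicity and convexity $\lambda_{\max}(F(y)-F(\hat x))\ge\tfrac{\lambda}{n-1}\norm{y-\hat x}_\infty$. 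Because the comparison direction is scaled by $\norm{d}_\infty$, arbitrarily small violations of $\sum_j(y_j-\hat x_j)>0$ are detected, which is precisely what yields uniqueness, the minimality characterization in \eqref{eq:sd_program_nonoise}, and the error bound for \eqref{eq:sd_program_noisy}. Your soft part (existence, feasibility of $\hat x$, the $\ell_1$-budget bookkeeping, and the reduction of uniqueness to the optimization statement) is fine, but the core estimate needs to be replaced by an argument of this rescaled type rather than a fixed-grid comparison at the actual values of $x$.
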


To prove Theorem~\ref{thm:F_reformulation} we will show the following lemmas.
\begin{lemma}\label{lemma:sum_crit_general}
Let $F:\ \R^n_+\to \sS_m$, $n,m\geq 2$, be continuously differentiable, convex and monotonically non-increasing.
If, for some $x\in \R^n_+$,
\begin{equation}\label{eq:sum_crit_general}
F'(x)((n-1)e_j'-e_j)\not\preceq 0 \quad \text{ for all } \quad  j\in \{1,\ldots,n\},
\end{equation}
then, for all $d\in \R^n$, and $y\in \R^n_+$, 
\begin{alignat}{2}
\label{eq:sum_crit_ass1quant}
\lambda_{\max}(F'(x)d)&< \frac{\lambda \norm{d}_\infty}{n-1} \quad && \text{ implies } \quad \sum_{j=1}^n d_j> 0, \quad \text{ and }\\
\lambda_{\max}(F(y)-F(x))&<  \frac{\lambda \norm{y-x}_\infty}{n-1}   \quad && \text{ implies } \quad \sum_{j=1}^n (y_j-x_j)> 0,
\label{eq:sum_crit_ass2quant}
\end{alignat}
with $\lambda:=\min_{j=1,\ldots,n} \lambda_\text{max}(F'(x)((n-1)e_j'-e_j))$.
\end{lemma}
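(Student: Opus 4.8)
The plan is to reduce both implications to the single quantitative estimate
\[
\sum_{j=1}^n d_j\le 0 \quad\Longrightarrow\quad \lambda_{\max}(F'(x)d)\ge\frac{\lambda\norm{d}_\infty}{n-1}\qquad (d\in\R^n).
\]
Granting this, \eqref{eq:sum_crit_ass1quant} is exactly its contrapositive, and \eqref{eq:sum_crit_ass2quant} follows too: by the convexity inequality \eqref{eq:F_conv} applied with $x^{(0)}=x$ one has $F(y)-F(x)\succeq F'(x)(y-x)$, hence $\lambda_{\max}(F(y)-F(x))\ge\lambda_{\max}(F'(x)(y-x))$, and the estimate applied to $d=y-x$ finishes it. I would also note at the outset that $\lambda>0$, since each $F'(x)((n-1)e_j'-e_j)$ is by hypothesis not negative semidefinite, hence has strictly positive largest eigenvalue, and $\lambda$ is the minimum of finitely many such numbers.

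To prove the displayed estimate I would fix $d$ with $\sum_{j} d_j\le 0$, discard the trivial case $d=0$, and otherwise set $M:=\norm{d}_\infty>0$ and $\alpha:=\frac{M}{n-1}>0$. The combinatorial heart is to produce an index $j$ with $d_j\le-\alpha$: if $\norm{d}_\infty$ is attained by a negative entry, that index works because $-M\le-\alpha$; otherwise $\norm{d}_\infty$ is attained by a positive entry, so the positive part of $d$ is at least $M$, whence (using $\sum_j d_j\le 0$) the negative part has modulus at least $M$ while being spread over at most $n-1$ entries, and pigeonhole yields $j$ with $d_j\le-\frac{M}{n-1}=-\alpha$. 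I would then use the identity $(n-1)e_j'-e_j=(n-1)\1-ne_j$, i.e.\ the vector with $-1$ in position $j$ and $n-1$ in every other position, and set
\[
d':=\alpha\big((n-1)e_j'-e_j\big)-d\in\R^n .
\]
Its $j$-th entry is $-\alpha-d_j\ge 0$ by the choice of $j$, and for $i\ne j$ its $i$-th entry is $\alpha(n-1)-d_i=M-d_i\ge 0$, so $0\le d'\in\R^n_+$; hence \eqref{eq:F_mon} gives $F'(x)d'\preceq 0$, so that
\[
F'(x)d=\alpha\,F'(x)\big((n-1)e_j'-e_j\big)-F'(x)d'\ \succeq\ \alpha\,F'(x)\big((n-1)e_j'-e_j\big).
\]
Passing to largest eigenvalues (monotone under the Loewner order and positively homogeneous) then gives $\lambda_{\max}(F'(x)d)\ge\alpha\,\lambda_{\max}(F'(x)((n-1)e_j'-e_j))\ge\alpha\lambda=\frac{\lambda\norm{d}_\infty}{n-1}$, which is the estimate.

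I expect the only mildly delicate step to be the pigeonhole argument extracting an index $j$ with $d_j\le-\norm{d}_\infty/(n-1)$ out of the sign condition $\sum_j d_j\le 0$; everything else is a linear decomposition of the search direction into a positive multiple of one of the distinguished directions $(n-1)e_j'-e_j$ plus a nonnegative correction, combined with monotonicity of $\lambda_{\max}$ for $\preceq$ and the structural assumptions \eqref{eq:F_mon} and \eqref{eq:F_conv}.
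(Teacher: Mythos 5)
Your proof is correct and follows essentially the same route as the paper: you find an index $j$ with $d_j\le -\norm{d}_\infty/(n-1)$, bound $d$ from above by $\frac{\norm{d}_\infty}{n-1}\bigl((n-1)e_j'-e_j\bigr)$, and conclude via monotonicity of $F'(x)$ and of $\lambda_{\max}$ under the Loewner order, with \eqref{eq:sum_crit_ass2quant} reduced to \eqref{eq:sum_crit_ass1quant} by the convexity inequality exactly as in the paper. The only difference is packaging: the paper phrases the key step as the min/max statement $\min_j d_j\le -\frac{1}{n-1}\max_j d_j$ (contraposition of its auxiliary claim), while you extract the index directly from $\sum_j d_j\le 0$ by pigeonhole, which is the same combinatorial content.
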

\begin{proof}
We will show that, for all $d\in \R^n$,
\begin{equation}\label{eq:conv_mon_sum_aux}
\lambda_{\max}(F'(x)d)< \frac{\lambda \norm{d}_\infty}{n-1}
 \quad \text{ implies } \quad
\min_{j=1,\ldots,n} d_j > -\frac{1}{n-1} \max_{j=1,\ldots,n} d_j,
\end{equation}
which clearly implies \eqref{eq:sum_crit_ass1quant}. \eqref{eq:sum_crit_ass2quant} then follows from \eqref{eq:sum_crit_ass1quant} by 
the convexity property $F'(x)(y-x)\preceq F(y)-F(x)$.

We prove \eqref{eq:conv_mon_sum_aux} by contraposition and assume that there exists an index $k\in \{1,\ldots,n\}$ with 
\[
d_k=\min_{j=1,\ldots,n} d_j\leq -\frac{1}{n-1} \max_{j=1,\ldots,n} d_j.
\]
We have that either 
\[
\norm{d}_\infty=\max_{j=1,\ldots,n} d_j,\quad \text{ or } \quad \norm{d}_\infty=-\min_{j=1,\ldots,n} d_j=-d_k,
\]
and in both cases it follows that
\[
d_k\leq -\frac{1}{n-1}\norm{d}_\infty, \quad \text{ and thus } \quad d\leq -\frac{1}{n-1}\norm{d}_\infty e_k + \norm{d}_\infty e_k'.
\]
Hence, by \eqref{eq:sum_crit_general} and monotonicity,
\[
F'(x) d \succeq \frac{\norm{d}_\infty}{n-1} F'(x) \left( (n-1) e_k' - e_k \right),
\]
which yields that
\[
\lambda_{\max}(F'(x) d)\geq \frac{\norm{d}_\infty}{n-1} \lambda,
\]
so that \eqref{eq:conv_mon_sum_aux} is proven.\hfill $\Box$
\end{proof}

\begin{remark}\label{remark:conv_mon}
Lemma~\ref{lemma:sum_crit_general} can be considered a converse monotonicity result, as it yields that, for all $y\in \R^n_+$, with $y\neq x$,  
\begin{alignat*}{2}
F(y)&\preceq F(x)  \quad && \text{ implies } \quad \sum_{j=1}^n (y_j-x_j)> 0.
\end{alignat*}
\end{remark}

\begin{lemma}\label{lemma:finitely_many}
Let $F:\ \R^n_+\to \sS_m$, $n,m\geq 2$, be continuously differentiable, convex and monotonically non-increasing, and $b\geq a>0$.
If 
\[
\lambda:=\min_{\mycom{j=1,\ldots,n,}{k=2,\ldots,K}} \lambda_\text{max}(F'(z_{j,k})(d_j))>0,
\]
where $z_{j,k}\in \R_+^n$, $d_j\in \R^n$, and $K\in \N$ are defined as in Theorem~\ref{thm:F_reformulation}), then 
\[
\lambda_{\max}(F'(x)((n-1) e_j'-e_j))\geq \lambda \quad \text{ for all } x\in [a,b]^n,\ j\in \{1,\ldots,n\}.
\]
\end{lemma}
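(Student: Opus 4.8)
The aim is to propagate the finitely many strict-positivity hypotheses at the sample points $z_{j,k}$ into the uniform differential inequality $\lambda_{\max}(F'(x)((n-1)e_j'-e_j))\ge\lambda$ on the whole cube $[a,b]^n$, using only the monotonicity \eqref{eq:F_mon} and the convexity \eqref{eq:F_conv} of $F$. The plan has three steps: first replace the direction $(n-1)e_j'-e_j$ by the ``larger'' direction $d_j$; then exploit that, by convexity, $F'(\cdot)d_j$ is monotone (in the Loewner order) along the ray direction $d_j$; and finally combine this with monotonicity to see that the rays issuing from the $z_{j,k}$ in direction $d_j$ dominate every point of $[a,b]^n$.

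For the first step one computes $d_j-((n-1)e_j'-e_j)=\tfrac{2(b-a)}{a}(n-1)e_j'+\tfrac12 e_j\ge 0$ (here $b\ge a$ is used), so \eqref{eq:F_mon} gives $F'(x)\big(d_j-((n-1)e_j'-e_j)\big)\preceq 0$, i.e.\ $F'(x)((n-1)e_j'-e_j)\succeq F'(x)d_j$; since $\lambda_{\max}$ respects the Loewner order it suffices to prove $\lambda_{\max}(F'(x)d_j)\ge\lambda$ for all $x\in[a,b]^n$ and all $j$. For the second step, from \eqref{eq:F_conv} (applied to a pair of points and then to the same pair in reversed order) one obtains the monotone-operator inequality $(F'(p)-F'(q))(p-q)\succeq 0$ for all $p,q\in\R^n_+$; with $p=z_{j,k}+sd_j$, $q=z_{j,k}$ this gives $F'(z_{j,k}+sd_j)d_j\succeq F'(z_{j,k})d_j$ for every $s\ge 0$ with $z_{j,k}+sd_j\in\R^n_+$, hence $\lambda_{\max}(F'(y)d_j)\ge\lambda$ for every $y$ on the ray $\{z_{j,k}+sd_j:\ s\ge 0\}\cap\R^n_+$ and every admissible $k$.

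The third step is the actual content, and it is precisely here that the particular values $z_{j,k}=\tfrac a2 e_j'+(a+k\tfrac{a}{4(n-1)})e_j$, $d_j=\tfrac{2b-a}{a}(n-1)e_j'-\tfrac12 e_j$ and $K=\lceil\tfrac{4(n-1)b}{a}\rceil-4n-3$ enter. The coefficient $\tfrac{2b-a}{a}(n-1)$ of $e_j'$ in $d_j$ is calibrated so that, moving along $d_j$, the common $e_j'$-level of a point rises from $a/2$ to $b$ exactly while its $e_j$-coordinate drops by one grid step $\tfrac{a}{4(n-1)}$; the grid step is small enough that the set of indices $k$ for which $x$ lies above $z_{j,k}$ ``in the $d_j$-ray sense'' (up to a final monotonicity comparison) is an interval of length at least $1$ and hence contains an integer; and $K$ is chosen large enough for this integer to lie in $\{2,\dots,K\}$. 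One then concludes by comparing $\lambda_{\max}(F'(x)d_j)$ with $\lambda_{\max}(F'(y)d_j)$ for such a $y$, which is where monotonicity re-enters. I expect the main obstacle to be exactly this covering argument --- verifying, uniformly in $x\in[a,b]^n$, that the admissible index set is nonempty and contained in $\{2,\dots,K\}$, the corner of the cube (all $x_i$ near $b$) being the delicate case and possibly needing a short separate treatment --- whereas the functional-analytic input is entirely exhausted by the two elementary consequences of \eqref{eq:F_mon} and \eqref{eq:F_conv} used above.
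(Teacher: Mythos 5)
There is a genuine gap, and it is created by your very first reduction. The only tools available for comparing derivative matrices are (i) monotonicity \eqref{eq:F_mon}, which compares $F'(p)u_1$ and $F'(p)u_2$ at a \emph{fixed} base point $p$ when $u_1\le u_2$, and (ii) convexity \eqref{eq:F_conv}, which compares $F'(p)$ and $F'(q)$ at two base points only in the displacement direction $p-q$ (your gradient-monotonicity inequality $(F'(p)-F'(q))(p-q)\succeq 0$). Any chain built from these that bounds $F'(x)v$ from below by $F'(z)w$ requires a sandwich $v\le \tfrac1c(x-z)\le w$ for some $c>0$. After your Step 1 you have committed to $v=w=d_j$ (or a positive multiple), and then the sandwich forces $x-z$ to be exactly a positive multiple of $d_j$, i.e.\ $x$ must lie exactly on one of the rays $z_{j,k}+s\,d_j$; since all $z_{j,k}$ and $d_j$ have equal off-$j$ components, these rays do not cover $[a,b]^n$, not even for $n=2$. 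Your Step 3 acknowledges this and proposes to bridge the remaining distance by ``comparing $\lambda_{\max}(F'(x)d_j)$ with $\lambda_{\max}(F'(y)d_j)$ \ldots which is where monotonicity re-enters'' --- but monotonicity gives no comparison between $F'(x)d_j$ and $F'(y)d_j$ for $x\neq y$: \eqref{eq:F_mon} constrains the direction argument at a single point, not the dependence of $F'$ on the base point, and \eqref{eq:F_conv} only helps in the direction $x-y$. So the decisive step is missing, and it is not even clear that your intermediate claim $\lambda_{\max}(F'(x)d_j)\ge\lambda$ for all $x\in[a,b]^n$ follows from the hypotheses at all.

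The paper avoids this by \emph{not} replacing $(n-1)e_j'-e_j$ by $d_j$ at the point $x$: it keeps the smaller direction at $x$ and the larger direction $d_j$ at the sample point, and chooses $z_{j,k}$ so that $(n-1)e_j'-e_j\le \tfrac{2(n-1)}{a}(x-z_{j,k})\le d_j$. The slack between the two directions (the off-$j$ components $n-1$ versus $\tfrac{2b-a}{a}(n-1)$, and $-1$ versus $-\tfrac12$ in the $j$-th component) is exactly what makes this sandwich achievable for \emph{every} $x\in[a,b]^n$: one takes $z_{j,k}$ with off-$j$ components $a/2$ (so $x_i-\tfrac a2\in[\tfrac a2,\,b-\tfrac a2]$) and $j$-th component slightly \emph{above} $x_j$ (between $x_j+\tfrac{a}{4(n-1)}$ and $x_j+\tfrac{a}{2(n-1)}$, which is why $K$ grid values suffice); note this is the opposite of your picture of points dominated from below along rays. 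Then the four-step chain monotonicity--convexity--convexity--monotonicity yields $F'(x)((n-1)e_j'-e_j)\succeq F'(z_{j,k})d_j$ and the claim follows by taking $\lambda_{\max}$. Your Steps 1 and 2 are correct in themselves (and Step 2 is essentially the convexity input the paper also uses, in symmetrized form), but to repair the argument you must give up the Step 1 reduction and prove the two-sided componentwise sandwich with the asymmetric pair of directions, which is the actual content of the lemma.
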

\begin{proof}
We will show that for all $j\in \{1,\ldots,n\}$ and $x\in [a,b]^n$, there exists $t\in [a+\frac{a}{2(n-1)},\ b+\frac{a}{2(n-1)}]\subset \R$, so that,
for all $0\leq \delta\leq \frac{a}{4(n-1)}$,
\begin{equation}\label{eq:finitely_many}
F'(x)((n-1) e_j'-e_j)\succeq F'\left( \frac{a}{2}e_j' + (t-\delta) e_j \right) d_j.
\end{equation}
Since $a+K\frac{a}{4(n-1)}
\geq b+\frac{a}{4(n-1)}$, we have that for every $t\in [a+\frac{a}{2(n-1)},\ b+\frac{a}{2(n-1)}]$,
there exists $k\in \{2,\ldots,K\}$, so that
\[
\delta:=t-\left(a+k\frac{a}{4(n-1)}\right)\leq \frac{a}{4(n-1)} \quad \text{ fulfills } \quad 
0\leq \delta\leq \frac{a}{4(n-1)}.
\]
Hence, if \eqref{eq:finitely_many} is proven, then
\[
F'(x)((n-1) e_j'-e_j)\succeq F'(z_{j,k}) d_j,
\]
so that the assertion follows.

To prove \eqref{eq:finitely_many}, let $j\in \{1,\ldots,n\}$, and $x\in [a,b]^n$. We define 
$
t:=x_j+\frac{a}{2(n-1)}.
$
Then, for all $0\leq \delta\leq \frac{a}{4(n-1)}$
\begin{align*}
(n-1)e_j'-e_j&=\frac{2(n-1)}{a}\left(\frac{a}{2} e_j' + (x_j-t) e_j\right)
\leq  \frac{2(n-1)}{a}\left( x- \frac{a}{2}e_j' - te_j \right)\\
&\leq  \frac{2(n-1)}{a}\left( x- \left( \frac{a}{2}e_j' + (t-\delta) e_j \right) \right),
\end{align*}
and
\begin{align*}
\lefteqn{\frac{2(n-1)}{a}\left( x- \left( \frac{a}{2}e_j' + (t-\delta) e_j \right) \right)}\\
& \leq \frac{2(n-1)}{a}\left( \left( b- \frac{a}{2}\right)e_j' + (x_j-t+\delta) e_j \right)\\
& = \frac{2b-a}{a}(n-1)e_j' + \frac{2(n-1)}{a} \left(- \frac{a}{2(n-1)} +\delta \right) e_j\\
& \leq \frac{2b-a}{a}(n-1)e_j' - \frac{1}{2}   e_j=d_j, 
\end{align*}
so that we obtain from monotonicity \eqref{eq:F_mon} and convexity \eqref{eq:F_conv}
\begin{align*}
\lefteqn{F'(x)((n-1)e_j'-e_j)}\\
&\succeq \frac{2(n-1)}{a} F'(x)\left( x- \left( \frac{a}{2}e_j' + (t-\delta) e_j \right) \right)\\
&\succeq \frac{2(n-1)}{a} \left(F(x)-F\left( \frac{a}{2}e_j' + (t-\delta) e_j \right)\right)\\
&\succeq \frac{2(n-1)}{a} F'\left( \frac{a}{2}e_j' + (t-\delta) e_j \right)\left( x- \left( \frac{a}{2}e_j' + (t-\delta) e_j \right) \right)\\
&\succeq F'\left( \frac{a}{2}e_j' + (t-\delta) e_j \right)d_j,
\end{align*}
which proves \eqref{eq:finitely_many} and thus the assertion.\hfill $\Box$
\end{proof}

\emph{Proof of Theorem~\ref{thm:F_reformulation}.}
Under the assumption of Theorem~\ref{thm:F_reformulation}, it follows from Lemma~\ref{lemma:finitely_many}, that the assumptions of Lemma~\ref{lemma:sum_crit_general}
are fulfilled, so that \eqref{eq:sum_crit_ass2quant} holds for all $x,y\in [a,b]^n$. In particular this yields that
$\hat Y:=F(\hat x)$ uniquely determines $\hat x\in [a,b]^n$.
Moreover, for every $x\in [a,b]^n$ with $x\neq \hat x$, and $F(x)\preceq \hat Y=F(\hat x)$, we obtain from Remark~\ref{remark:conv_mon} that
\[
\sum_{j=1}^n (x_j-\hat x_j)> 0,
\]
which shows that $\hat x$ is the unique minimizer of \eqref{eq:sd_program_nonoise}. This proves Theorem~\ref{thm:F_reformulation}(a).
To prove Theorem~\ref{thm:F_reformulation}(b), we note that the set of all $x\in [a,b]^n$ with $F(x)\preceq \hat Y^\delta+\delta I$ is compact and non-empty since it contains $\hat x$.
Hence, at least one minimizer of \eqref{eq:sd_program_noisy} exists. Every minimizer $x^\delta \in [a,b]^n$ fulfills 
\[
F(x^\delta)\preceq \hat Y^\delta+\delta I\preceq F(\hat x)+2\delta I.
\]
If $2\delta< \frac{\lambda\norm{x^\delta-\hat x}_\infty}{n-1}$, then \eqref{eq:sum_crit_ass2quant} 
would imply that
\[
\sum_{j=1}^n (x_j-\hat x_j)> 0,
\]
which contradicts the minimality of $x^\delta$. Hence $\norm{x^\delta-\hat x}_\infty\leq \frac{2\delta (n-1)}{\lambda}$.
\hfill $\Box$

\section{Application to an inverse elliptic coefficient problem}\label{sect:Robin}

We will now study the problem of determining a Robin transmission coefficient in an elliptic PDE from 
finitely many measurements. Using Theorem~\ref{thm:F_reformulation} we will show that 
this inverse coefficient problem can be rewritten as a uniquely solvable convex non-linear semidefinite optimization problem
if enough measurements are being used. This also gives a constructive criterion whether a certain number of measurements suffices
to determine the Robin parameter with a given desired resolution by convex optimization, and yields an error estimate for noisy data.

\subsection{The infinite-dimensional inverse Robin transmission problem}


Let $\Omega\subset \R^d$ ($d\geq 2$) be a bounded domain and $D\subset \Omega$ be an open subset with $\overline D\subset \Omega$.
$\Omega$ and $D$ are assumed to have Lipschitz boundaries, $\partial \Omega$ and $\Gamma:=\partial D$, and $\Omega\setminus D$ is assumed to be connected.
We consider the inverse problem of recovering the coefficient $\gamma\in L^\infty_+(\Gamma)$ in the elliptic Robin transmission problem 
\begin{alignat}{2}
\label{eq:Robin1} \Delta u_\gamma^g &=0 \quad && \text{ in } \Omega\setminus \Gamma,\\
\label{eq:Robin2} \partial_{\nu} u_\gamma^g|_{\partial \Omega}&= g\quad && \text{ on  }\partial \Omega,\\
\label{eq:Robin3} \llbracket u_\gamma^g \rrbracket_\Gamma&=  0 \quad && \text{ on }  \Gamma,\\
\label{eq:Robin4} \llbracket \partial_{\nu} u_\gamma^g\rrbracket_\Gamma &= \gamma u_\gamma^g \quad &&\text{ on }  \Gamma,
\end{alignat}
from the Neumann-Dirichlet-Operator 
\[
\Lambda(\gamma)g:=u_\gamma^g|_{\partial \Omega}, \quad \text{ where } \quad u_\gamma^g\in H^1(\Omega) \text{ solves \eqref{eq:Robin1}--\eqref{eq:Robin4}.}
\]
Using the Lax-Milgram theorem and the compactness of the trace operator from $H^{1}(\Omega)$ to $L^2(\partial \Omega)$, it
easily follows that \eqref{eq:Robin1}--\eqref{eq:Robin4} is uniquely solvable, and that $\Lambda(\gamma)\in \LL(L^2(\partial \Omega))$ is self-adjoint and compact.

We summarize and reformulate some known results on the Neumann-Dirichlet operator that motivate why the corresponding finite-dimensional inverse problem 
can be treated with the methods from section~\ref{sect:inv_prob}. In the following theorem ''$\leq$'' is to be understood pointwise almost everywhere
for $L^\infty$-functions, and ''$\succeq$'' is the Loewner order on the space of self-adjoint operators.


\begin{theorem}\label{thm:Robin_infdim}
$\Lambda:\ L^\infty_+(\Omega)\to \LL(L^2(\partial \Omega))$ is Fr\'echet differentiable. Moreover,
\begin{enumerate}[(a)]
\item $\Lambda$ is monotonically non-increasing and convex, i.e.,
\begin{alignat*}{2}
\Lambda'(\gamma)\delta&\preceq 0 \quad && \text{for all $\gamma\in L^\infty_+(\Omega)$, $0\leq \delta\in L^\infty(\Omega)$,}\\
\Lambda(\gamma)-\Lambda(\gamma^{(0)})&\succeq \Lambda'(\gamma^{(0)}) (\gamma-\gamma^{(0)}) \quad && \text{for all $\gamma,\gamma^{(0)}\in L^\infty_+(\Omega)$.}
\end{alignat*}
\item For all $\gamma\in L^\infty_+(\Omega)$, $C>0$, and
$M\subseteq \Gamma$ measurable with positive measure
\[
\Lambda'(\gamma)(C\chi_{\Gamma\setminus M}-\chi_M)\not\preceq 0.
\]
\item For all $\gamma_1,\gamma_2\in L^\infty_+(\partial \Omega)$
\[
\gamma_1\leq \gamma_2 \quad \text{ if and only if } \quad \Lambda(\gamma_1)\succeq \Lambda(\gamma_2).
\]
In particular, $\Lambda(\gamma)$ uniquely determines $\gamma$. 
\end{enumerate}
\end{theorem}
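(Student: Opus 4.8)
The plan is to run the monotonicity method starting from the weak formulation of \eqref{eq:Robin1}--\eqref{eq:Robin4}: $u_\gamma^g\in H^1(\Omega)$ is the solution if and only if
\[
b_\gamma(u_\gamma^g,v)=\int_{\partial\Omega}g\,v\dx[s]\ \text{ for all }v\in H^1(\Omega),\qquad b_\gamma(u,v):=\int_\Omega\nabla u\cdot\nabla v\dx+\int_\Gamma\gamma\,u\,v\dx[s].
\]
Since $b_\gamma$ is coercive on $H^1(\Omega)$ for $\gamma\in L^\infty_+(\Gamma)$ (using that $\|\nabla\,\cdot\,\|_{L^2(\Omega)}^2+\|\,\cdot\,\|_{L^2(\Gamma)}^2$ is an equivalent norm, by a standard compactness argument with $\Omega$ connected), Lax--Milgram gives a bounded solution operator $S_\gamma\in\LL(L^2(\partial\Omega),H^1(\Omega))$, and $\Lambda(\gamma)=\mathrm{tr}_{\partial\Omega}\circ S_\gamma$ is bounded, self-adjoint, compact (as observed in the text) and satisfies $\langle\Lambda(\gamma)g,g\rangle=b_\gamma(u_\gamma^g,u_\gamma^g)\ge0$. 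For Fréchet differentiability I would subtract the identities defining $u_{\gamma+\delta}^g$ and $u_\gamma^g$, test with $u_{\gamma+\delta}^g-u_\gamma^g$, and use coercivity to get $\|u_{\gamma+\delta}^g-u_\gamma^g\|_{H^1(\Omega)}\lesssim\|\delta\|_{L^\infty(\Gamma)}\|g\|_{L^2(\partial\Omega)}$; the candidate derivative $w:=(S_\gamma'(\gamma)\delta)g$, characterized by $b_\gamma(w,v)=-\int_\Gamma\delta\,u_\gamma^g\,v\dx[s]$ for all $v$, then has a quadratic remainder by the same estimate. Composing with the trace yields the key identity
\[
\langle(\Lambda'(\gamma)\delta)g,h\rangle_{L^2(\partial\Omega)}=-\int_\Gamma\delta\,u_\gamma^g\,u_\gamma^h\dx[s],\qquad\text{in particular}\quad\langle(\Lambda'(\gamma)\delta)g,g\rangle=-\int_\Gamma\delta\,|u_\gamma^g|^2\dx[s];
\]
the same estimates make $\gamma\mapsto\Lambda'(\gamma)$ locally Lipschitz, so $\Lambda$ is in fact $C^1$.

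For \textbf{(a)}, monotonicity is immediate: if $0\le\delta$, then $\langle(\Lambda'(\gamma)\delta)g,g\rangle\le0$ for every $g$, i.e.\ $\Lambda'(\gamma)\delta\preceq0$. For convexity I would use the dual (Dirichlet principle) formula
\[
\langle\Lambda(\gamma)g,g\rangle=\max_{v\in H^1(\Omega)}\Bigl(2\int_{\partial\Omega}g\,v\dx[s]-\int_\Omega|\nabla v|^2\dx-\int_\Gamma\gamma\,v^2\dx[s]\Bigr),
\]
which displays $\gamma\mapsto\langle\Lambda(\gamma)g,g\rangle$ as a pointwise maximum of maps that are affine, with non-positive linear part, in $\gamma$ --- hence convex and non-increasing. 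Inserting the competitor $v=u_{\gamma^{(0)}}^g$ gives $\langle(\Lambda(\gamma)-\Lambda(\gamma^{(0)}))g,g\rangle\ge-\int_\Gamma(\gamma-\gamma^{(0)})|u_{\gamma^{(0)}}^g|^2\dx[s]$, whose right-hand side equals $\langle(\Lambda'(\gamma^{(0)})(\gamma-\gamma^{(0)}))g,g\rangle$ by the identity above; since $A\preceq B$ on $\LL(L^2(\partial\Omega))$ means $\langle Ag,g\rangle\le\langle Bg,g\rangle$ for all $g$, this is exactly $\Lambda(\gamma)-\Lambda(\gamma^{(0)})\succeq\Lambda'(\gamma^{(0)})(\gamma-\gamma^{(0)})$.

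\textbf{(b)} is the crux. Fix $\gamma$ and $M$, put $A:=(\,\cdot\,|_M)\circ S_\gamma\in\LL(L^2(\partial\Omega),L^2(M))$ and $B:=(\,\cdot\,|_{\Gamma\setminus M})\circ S_\gamma\in\LL(L^2(\partial\Omega),L^2(\Gamma\setminus M))$, so that the identity from \textbf{(a)} reads $\langle(\Lambda'(\gamma)(C\chi_{\Gamma\setminus M}-\chi_M))g,g\rangle=\|Ag\|_{L^2(M)}^2-C\,\|Bg\|_{L^2(\Gamma\setminus M)}^2$. Hence $\Lambda'(\gamma)(C\chi_{\Gamma\setminus M}-\chi_M)\preceq0$ would mean $A^*A\preceq C\,B^*B$, whence $\range(A^*)\subseteq\range(B^*)$ by Douglas' range-inclusion lemma. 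By duality, $A^*\psi=w_\psi|_{\partial\Omega}$, where $w_\psi\in H^1(\Omega)$ solves $b_\gamma(w_\psi,v)=\int_\Gamma(\psi\chi_M)\,v\dx[s]$ for all $v$ --- i.e.\ the transmission problem with homogeneous Neumann data on $\partial\Omega$ and source supported in $M$ on $\Gamma$ --- and similarly for $B^*$ with source in $\Gamma\setminus M$. I would then show $A^*\chi_M\notin\range(B^*)$ (note $A^*\chi_M\neq0$): if $w_{\chi_M}|_{\partial\Omega}=w_\phi|_{\partial\Omega}$ for some $\phi$ supported in $\Gamma\setminus M$, then $z:=w_{\chi_M}-w_\phi$ is harmonic in $\Omega\setminus\overline D$ with vanishing Cauchy data on $\partial\Omega$, hence $z\equiv0$ there by unique continuation (using that $\Omega\setminus\overline D$ is connected); then $z$ has zero Dirichlet trace on $\Gamma=\partial D$ and is harmonic in $D$, so $z\equiv0$ in $D$ and thus in all of $\Omega$, which forces the sources to agree, $\chi_M=\phi\chi_{\Gamma\setminus M}$, hence (disjoint supports) $0=\chi_M$ --- a contradiction. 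So $\range(A^*)\not\subseteq\range(B^*)$ and \textbf{(b)} follows. The unique-continuation step (for $H^1$ solutions with zero Cauchy data across the Lipschitz interface and boundary) is the main obstacle; everything else is bookkeeping.

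Finally, for \textbf{(c)}: inserting competitors into the Dirichlet principle in both directions gives
\[
\int_\Gamma(\gamma_2-\gamma_1)|u_{\gamma_2}^g|^2\dx[s]\ \le\ \langle(\Lambda(\gamma_1)-\Lambda(\gamma_2))g,g\rangle\ \le\ \int_\Gamma(\gamma_2-\gamma_1)|u_{\gamma_1}^g|^2\dx[s].
\]
If $\gamma_1\le\gamma_2$, the left integrand is $\ge0$, so $\Lambda(\gamma_1)\succeq\Lambda(\gamma_2)$. Conversely, if $\gamma_1\not\le\gamma_2$, there are $M\subseteq\Gamma$ of positive measure and $c>0$ with $\gamma_1-\gamma_2\ge c$ on $M$; then $\Lambda(\gamma_1)\succeq\Lambda(\gamma_2)$ together with the right inequality would force $\int_\Gamma(\gamma_1-\gamma_2)|u_{\gamma_1}^g|^2\dx[s]\le0$ for all $g$, while
\[
\int_\Gamma(\gamma_1-\gamma_2)|u_{\gamma_1}^g|^2\dx[s]\ \ge\ c\,\|u_{\gamma_1}^g\|_{L^2(M)}^2-\|\gamma_1-\gamma_2\|_{L^\infty(\Gamma)}\,\|u_{\gamma_1}^g\|_{L^2(\Gamma\setminus M)}^2
\]
is made strictly positive by choosing $g$ from part \textbf{(b)} with $\gamma=\gamma_1$ and $C=\|\gamma_1-\gamma_2\|_{L^\infty(\Gamma)}/c$. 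Thus $\gamma_1\le\gamma_2\iff\Lambda(\gamma_1)\succeq\Lambda(\gamma_2)$, and applying this with $\gamma_1,\gamma_2$ interchanged shows that $\Lambda(\gamma)$ determines $\gamma$.
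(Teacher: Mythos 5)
Your proposal is correct and in substance takes the same route as the paper: the paper disposes of the differentiability, monotonicity, convexity and the definiteness statement (b) by citing \cite{harrach2021uniqueness} and \cite{harrach2019global}, whose proofs are exactly the variational/derivative-formula arguments and the localized-potentials argument (range inclusion for the adjoint ``virtual measurement'' operators plus unique continuation) that you reconstruct, and it then obtains (c) from (a) and (b) via the convexity inequality just as you do.
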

\begin{proof}
Fr\'echet differentiability, monotonicity and convexity of $\Lambda$ are shown in \cite[Lemma~5]{harrach2021uniqueness}, cf.\ also
\cite[Lemma 4.1]{harrach2019global}. (b) follows from the localized potentials result in \cite[Lemma 4.3]{harrach2019global}.
The ''only if''-part in (c) follows from (a), and the ''if''-part in (c) easily follows from using (b) together with the convexity inequality in (a).
\hfill $\Box$
\end{proof}

\subsection{The inverse problem with finitely many measurements}

We now consider the inverse Robin transmission problem with finite resolution and finitely many measurements as in \cite{harrach2021uniqueness}. 
We assume that the unknown coefficient function $\gamma\in L^\infty_+(\Gamma)$ is piecewise constant on an a-priori known partition of $\Gamma$,
i.e.
\[
\gamma(x)=\sum_{j=1}^n \gamma_j \chi_{\Gamma_j}(x), \quad \text{ with } \Gamma=\bigcup_{j=1}^n \Gamma_j,
\]
where $\Gamma_1,\ldots,\Gamma_n$, $n\geq 2$, are pairwise disjoint measurable subsets of $\Gamma$. For the ease of notation, we identify a piecewise constant function $\gamma\in L^\infty(\Gamma)$
with the vector $\gamma=(\gamma_1,\ldots,\gamma_n)^T\in \R^n$ in the following. We also assume that we know a-priori bounds $b>a>0$, so that $\gamma\in [a,b]^n$.

We aim to reconstruct $\gamma\in [a,b]^n$ from finitely many measurements of $\Lambda(\gamma)$. More precisely, we assume that $(g_j)_{j\in \N}\subseteq L^2(\partial \Omega)$ has dense span in $L^2(\partial \Omega)$,
and that we can measure
\[
F(\gamma):=\left(\int_{\partial \Omega} g_j \Lambda(\gamma) g_k \dx[s]\right)_{j,k=1,\ldots,m}\in \R^{m\times m}
\]
for some number $m\in \N$. Thus, the question whether a certain number of measurements determine the unknown coefficient with a certain resolution can be written as
the problem to
\begin{equation}\label{eq:inv_problem_Robin}
\text{ determine } \quad \gamma\in [a,b]^n \quad \text{ from } \quad F(\gamma)\in \R^{m\times m}.
\end{equation}
Using our results in section~\ref{sect:inv_prob} we can now show that this inverse problem is uniquely solvable if sufficiently many measurements are being used, and that it can be equivalently reformulated 
as a convex semidefinite program. 

\begin{theorem}
\begin{enumerate}[(a)]
\item If $m\in \N$ is sufficiently large then $\hat Y:=F(\hat \gamma)\in  \sS_m$ uniquely determines $\hat \gamma\in [a,b]^n$.
$\hat \gamma$ is the unique minimizer of the convex semi-definite optimization problem
\[
\verb|minimize |\ \norm{\gamma}_1=\sum_{j=1}^n \gamma_j \ \verb| subject to | \ \gamma\in [a,b]^n,\ F(\gamma)\preceq \hat Y.
\]
\item The assertion in (a) holds if all matrices $F'(z_{j,k})d_j\in \sS_m$,
(with $z_{j,k}\in \R_+^n$, $d_j\in \R^n$, and $K\in \N$ given in Theorem~\ref{thm:F_reformulation})
possess at least one positive eigenvalue. This criterion is fulfilled for sufficiently large $m\in \N$.
Moreover, for $\delta>0$, and $Y^\delta\in \sS_m$, with $\norm{\hat Y-Y^\delta}_2\leq \delta$, the convex optimization problem 
\begin{equation*}
\verb|minimize |\ \norm{\gamma}_1=\sum_{j=1}^n \gamma_j \ \verb| subject to | \ \gamma\in [a,b]^n,\ F(\gamma)\preceq Y^\delta+\delta I
\end{equation*}
possesses a minimum, and every such minimum $\gamma^\delta$ fulfills
\[
\norm{\hat \gamma-\gamma^\delta}_\infty\leq \frac{2\delta(n-1)}{\lambda}\quad \text{ with $\lambda:=\min_{\mycom{j=1,\ldots,n,}{k=2,\ldots,K}} \lambda_\text{max}(F'(z_{j,k})(d_j))>0$.}
\]
\end{enumerate}
\end{theorem}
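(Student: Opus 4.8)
The plan is to verify that the abstract framework of Section~\ref{sect:inv_prob} applies to the finite-dimensional forward map $F$ defined above, and then simply invoke Theorem~\ref{thm:F_reformulation}. The key observation is that $F$ is the composition of the infinite-dimensional Neumann-Dirichlet map $\Lambda$ with a fixed linear ''measurement'' map $x\mapsto \gamma=\sum_j x_j\chi_{\Gamma_j}$ followed by the bilinear sampling $B\mapsto (\langle g_j,Bg_k\rangle)_{j,k=1}^m$, which takes self-adjoint operators to symmetric matrices and is linear and monotone (it maps $B\preceq 0$ to a negative semidefinite matrix). Hence, first I would record that $F:\R^n_+\to\sS_m$ is continuously (in fact Fr\'echet) differentiable by Theorem~\ref{thm:Robin_infdim}, with $F'(x)d$ given by sampling $\Lambda'(\gamma)(\sum_j d_j\chi_{\Gamma_j})$; that $F$ inherits monotonicity and convexity in the Loewner order directly from Theorem~\ref{thm:Robin_infdim}(a), because the sampling map is linear and Loewner-monotone; and therefore that the hypotheses ''continuously differentiable, convex, monotonically non-increasing'' of Theorem~\ref{thm:F_reformulation} hold. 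Part (b) of the present theorem is then literally the statement of Theorem~\ref{thm:F_reformulation}(a)+(b) applied to this $F$, together with the extra remark that the criterion $\lambda>0$ can be checked by finitely many forward solves; so (b) requires, beyond the already-cited Theorem~\ref{thm:F_reformulation}, only the claim that the criterion is satisfied once $m$ is large enough.

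That last claim is the only real content and the main obstacle. I would prove it by contradiction: suppose that for every $m$ there is an index pair $(j,k)$ (among the finitely many $j\in\{1,\dots,n\}$, $k\in\{2,\dots,K\}$, which do not depend on $m$) such that $F'(z_{j,k})d_j\preceq 0$, i.e. such that $\langle g_l,\Lambda'(\gamma_{j,k})\delta_j\, g_l\rangle\le 0$ for all $l\le m$, where $\gamma_{j,k}=\sum_i (z_{j,k})_i\chi_{\Gamma_i}$ and $\delta_j=\sum_i (d_j)_i\chi_{\Gamma_i}$. Since there are only finitely many such pairs, some pair $(j,k)$ works for infinitely many $m$, hence for all $m$ along a subsequence. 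Because $(g_l)_{l\in\N}$ has dense span in $L^2(\partial\Omega)$ and $\Lambda'(\gamma_{j,k})\delta_j$ is a bounded self-adjoint operator, $\langle g_l,\Lambda'(\gamma_{j,k})\delta_j\,g_l\rangle\le 0$ for all $l$ forces $\Lambda'(\gamma_{j,k})\delta_j\preceq 0$ as an operator (a self-adjoint operator that is nonpositive on a spanning set of a dense subspace, and hence by continuity of the quadratic form on all of $L^2(\partial\Omega)$, is negative semidefinite). But $\delta_j$ is exactly a function of the form $C\chi_{\Gamma\setminus\Gamma_j}-\chi_{\Gamma_j}$ with $C=\frac{2b-a}{a}(n-1)>0$ and $\Gamma_j$ of positive measure, so Theorem~\ref{thm:Robin_infdim}(b) gives $\Lambda'(\gamma_{j,k})\delta_j\not\preceq 0$ --- a contradiction. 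Hence for all sufficiently large $m$ every $F'(z_{j,k})d_j$ has a positive eigenvalue, i.e. $\lambda>0$, and Theorem~\ref{thm:F_reformulation} applies.

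One subtlety to handle carefully in the write-up is the passage from ''$\langle g_l, Ag_l\rangle\le0$ for all $l$'' to ''$A\preceq 0$'': density of $\mathrm{span}\{g_l\}$ gives that the quadratic form $v\mapsto\langle v,Av\rangle$ is $\le 0$ on a dense subspace, and since this form is continuous on $L^2(\partial\Omega)$ (as $A=\Lambda'(\gamma_{j,k})\delta_j$ is bounded), it is $\le0$ everywhere, which is the definition of $A\preceq0$ in the Loewner order on self-adjoint operators used in Theorem~\ref{thm:Robin_infdim}. Note that $\langle v,Av\rangle=\sum_{l,l'}c_l\overline{c_{l'}}\langle g_l,Ag_{l'}\rangle$ is not controlled by the diagonal terms alone for a fixed finite linear combination unless one passes through the quadratic form and its continuity, which is why density (not merely the diagonal inequalities at level $m$) is the right tool. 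With this in place, the proof of the present theorem is essentially a bookkeeping reduction to Theorems~\ref{thm:F_reformulation} and~\ref{thm:Robin_infdim}, and I would present it in that order: (i) $F$ satisfies the structural hypotheses; (ii) therefore Theorem~\ref{thm:F_reformulation} yields (a) and the noisy estimate in (b) verbatim once $\lambda>0$; (iii) the contradiction argument above shows $\lambda>0$ for large $m$, completing (b) and hence (a).
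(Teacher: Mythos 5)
Your overall route is the same as the paper's: verify that $F$ inherits symmetry, differentiability, monotonicity and convexity from Theorem~\ref{thm:Robin_infdim}, invoke Theorem~\ref{thm:F_reformulation} verbatim, and use density of $\operatorname{span}\{g_l\}$ together with Theorem~\ref{thm:Robin_infdim}(b) to show the eigenvalue criterion holds for large $m$ (the paper argues this last point directly, you by contradiction -- an inessential difference). However, your execution of the density step contains a genuine error. You rewrite the matrix condition $F'(z_{j,k})d_j\preceq 0$ as ``$\langle g_l,\Lambda'(\gamma_{j,k})\delta_j\,g_l\rangle\le 0$ for all $l\le m$''; this ``i.e.'' is not an equivalence -- the matrix condition says the quadratic form of $A:=\Lambda'(\gamma_{j,k})\delta_j$ is nonpositive on all of $\operatorname{span}\{g_1,\dots,g_m\}$, whereas you retain only the diagonal entries. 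You then claim that a bounded self-adjoint operator whose quadratic form is nonpositive at every element of a set with dense span must satisfy $A\preceq 0$. That is false: for $A=\left(\begin{smallmatrix}0&1\\1&0\end{smallmatrix}\right)$ and the spanning set $\{e_1,e_2\}$ the diagonal values vanish, yet $A$ has the eigenvalue $+1$. Nonpositivity at the $g_l$ does not propagate to linear combinations because the quadratic form is not linear, and density plus continuity can only extend an inequality that already holds on a dense \emph{subspace}; your ``subtlety'' paragraph notices this tension but does not resolve it, since nothing in your argument establishes the inequality on the span if only the diagonal terms are used.

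The fix stays entirely within your strategy: keep the full matrix condition, i.e.\ note that $F'(z_{j,k})d_j\preceq 0$ at level $m$ means $\langle v,Av\rangle\le 0$ for all $v\in\operatorname{span}\{g_1,\dots,g_m\}$; along your subsequence these nested spans exhaust $\operatorname{span}\{g_l:\,l\in\N\}$, which is dense, so continuity of the form gives $A\preceq 0$, contradicting Theorem~\ref{thm:Robin_infdim}(b) (applied after rescaling, since $2d_j$ has exactly the form $C\chi_{\Gamma\setminus\Gamma_j}-\chi_{\Gamma_j}$ with $C=\tfrac{2(2b-a)}{a}(n-1)$). Two further small points: your contradiction hypothesis (``for every $m$ there is a bad pair'') only disproves that every $m$ is bad, hence yields one good $m$; to conclude ``for all sufficiently large $m$'', as the theorem states, add the observation that a positive eigenvalue persists when $m$ grows because the smaller matrix is a principal submatrix of the larger one. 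The paper's direct version avoids both issues at once: since $\Lambda'(z_{j,k})d_j\not\preceq 0$ there is a test function with positive quadratic form, which by density can be approximated by a finite combination of the $g_l$, producing a positive eigenvalue of $F'(z_{j,k})d_j$ for that and every larger $m$; then take the maximum over the finitely many pairs $(j,k)$.
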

\begin{proof}
$F(\gamma)$ is a symmetric matrix since $\Lambda(\gamma)$ is self-adjoint. Fr\'echet differentiability, monotonicity and convexity of $F:\ \R^n_+\to \sS_m$ immediately 
follow from the corresponding properties of $\Lambda$ in Theorem~\ref{thm:Robin_infdim}. 
For all $j=1,\ldots,n$, $k=2,\ldots,K$, we have that $\Lambda'(z_{j,k})d_j\not\preceq 0$ by  Theorem~\ref{thm:Robin_infdim}(b).
By density, it follows that for all $j=1,\ldots,n$, $k=2,\ldots,K$ there exists $m\in \N$ so that $F'(z_{j,k})d_j\not\preceq 0$, and since
there are only finitely many such combinations of $j$ and $k$, there exists $m\in N$, so that all these matrices possess a positive eigenvalue. Hence, the assertions
follow from Theorem~\ref{thm:F_reformulation}.\hfill $\Box$
\end{proof}


%
\section*{Conflict of interest and data availability statement}
The author declares that he has no conflict of interest. Data sharing not applicable to this article as no datasets were generated or analysed during the current study


\bibliographystyle{spmpsci}
\bibliography{literaturliste}

\end{document}